\def\BibTeX{{\rm B\kern-.05em{\sc i\kern-.025em b}\kern-.08em
    T\kern-.1667em\lower.7ex\hbox{E}\kern-.125emX}}
\let\old@ps@headings\ps@headings 
\let\old@ps@IEEEtitlepagestyle\ps@IEEEtitlepagestyle 
\def\confheader#1{%
\def\ps@headings{%
\old@ps@headings%
\def\@oddhead{\strut\hfill#1\hfill\strut}%
\def\@evenhead{\strut\hfill#1\hfill\strut}%
}%
\def\ps@IEEEtitlepagestyle{%
\old@ps@IEEEtitlepagestyle%
\def\@oddhead{\strut\hfill#1\hfill\strut}%
\def\@evenhead{\strut\hfill#1\hfill\strut}%
}%
\ps@headings%
} 
\newcommand{\rhob}{\rho}
\newcommand{\flab}[1]{\label{fig:#1}}
\newcommand{\fig}[1]{Fig.\ref{fig:#1}}
\newcommand{\elab}[1]{\label{eqn:#1}}
\newcommand{\eqn}[1]{(\ref{eqn:#1})}
\newcommand{\real}{\mathbb{R}}
\newtheorem{theorem}{Theorem}
\newtheorem*{remark}{Remark}
\newcommand{\diag}[1]{\textbf{diag}\left(#1\right)}
\newcommand{\sym}[1]{\textbf{sym}\big(#1\big)}
\newcommand{\vo}[1]{\boldsymbol{#1}}
\begin{document}

\title{Optimal Sensing Precision for Celestial Navigation Systems in Cislunar Space using LPV Framework}

\author{\IEEEauthorblockN{Eliot Nychka\thanks{Graduate Student, \texttt{eliot.nychka@tamu.edu}} \hspace{0.5in} Raktim Bhattacharya \thanks{Professor, \texttt{raktim@tamu.edu}}\vspace{.1in} }
\IEEEauthorblockA{Aerospace Engineering, Texas A\&M University,\\ College Station, TX, 77843-3141.
}}

\newgeometry{top=1in, bottom=0.75in, left=0.75in, right=0.75in}
\maketitle

\begin{abstract}
This paper introduces two innovative convex optimization formulations to simultaneously optimize the $\mathcal{H}_2/\mathcal{H}_\infty$ observer gain and sensing precision, and guarantee a specified estimation error bound for nonlinear systems in LPV form. Applied to the design of an onboard celestial navigation system for cislunar operations, these formulations demonstrate the ability to maintain accurate spacecraft positioning with minimal measurements and theoretical performance guarantees by design.
\end{abstract}

\begin{IEEEkeywords}
LPV Systems, Convex Optimization, Navigation Systems, Robust Control
\end{IEEEkeywords}

\section{Introduction}
The recent surge in interest toward cislunar space -- the region within the gravitational spheres of influence of Earth and the Moon -- is fueled by military and commercial motivations. The strategic significance of cislunar space lies in its potential to host assets for advanced surveillance and communication systems that exceed the capabilities within geostationary orbit, thus enhancing national security. From a commercial perspective, cislunar space represents a frontier of untapped potential, promising lucrative opportunities in areas such as resource extraction, space tourism, and as a launchpad for further solar system exploration. These prospects underscore the substantial economic implications of cislunar development.

Space Domain Awareness (SDA) in cislunar space is crucial as activities within this region increase. The strategic and economic significance of cislunar space necessitates sophisticated SDA capabilities to ensure the safety of navigational operations and the integrity of space assets. Effective SDA in cislunar space involves comprehensive monitoring, tracking, and predictive analytics to manage space traffic, debris, and potential threats, which are essential for collision avoidance and operational security.

Recently, planar Earth-Moon resonant orbits have been proposed as a feasible strategy for comprehensive cislunar SDA (CSDA) \cite{frueh2021cislunar}. Such a scenario is shown in \fig{constellation}, where surveillance platforms (shown as black dots) move along a candidate resonant orbit in sufficient large numbers to ensure necessary coverage, for example, to track a resident space object (RSO) shown in magenta.
\begin{figure}[h!]
    \centering
    \includegraphics[width=0.45\textwidth]{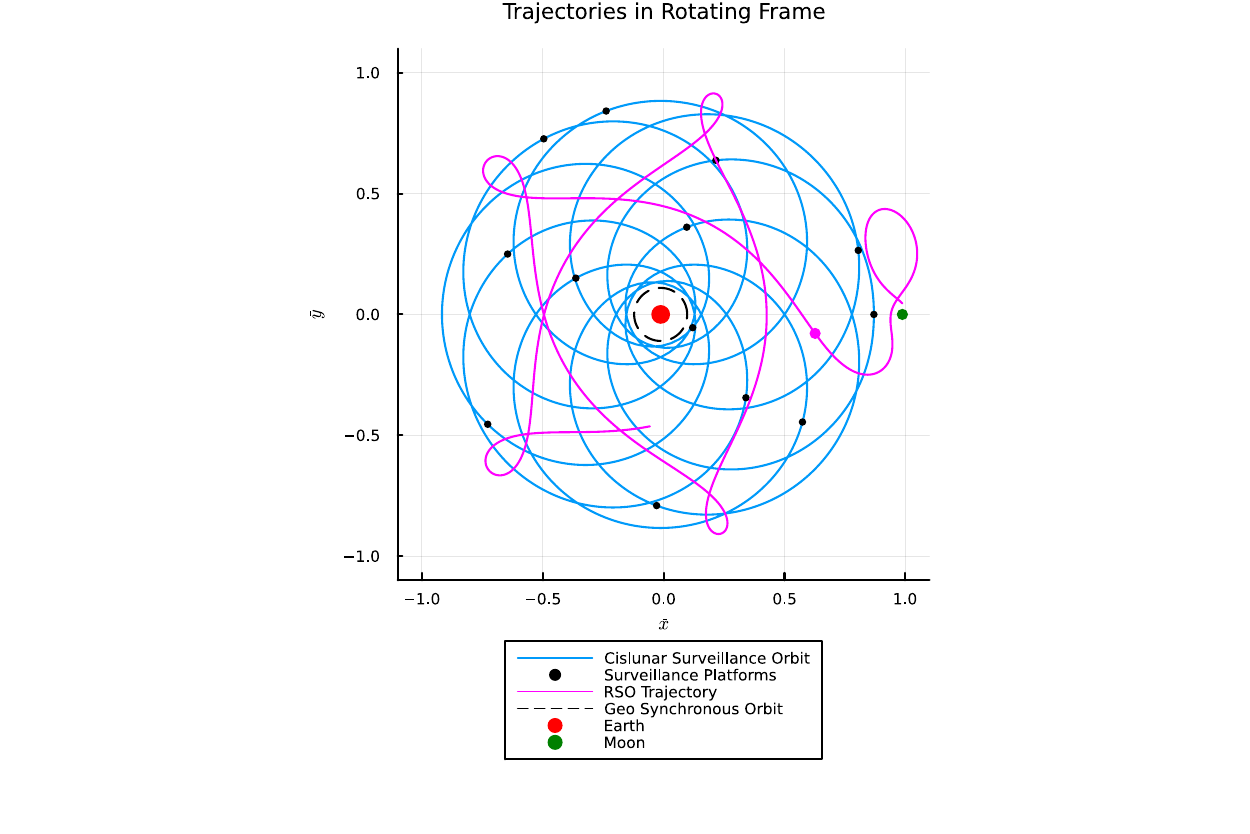}
    \caption{Earth-Moon resonant orbits in cislunar space for tracking resident space objects (RSOs).}
\flab{constellation}
\end{figure}

The reliability of such surveillance systems depends on the ability of each surveillance agent to navigate robustly in the cislunar space, which is challenging due to the complex dynamic interaction between the Earth and the Moon and limited sensing abilities. The gravitational interactions between the Earth, Moon, and spacecraft lead to non-linear and chaotic dynamics, particularly in regions influenced by Earth's and Moon's gravity. Celestial navigation is a proven method in deep space and relies on observations of stars, the Sun, the Earth, and the Moon to determine the spacecraft’s position and orientation. However, its application in cislunar space requires high precision in sensor technology and sophisticated algorithms robust to chaotic cislunar dynamics. 
\restoregeometry
State-of-the-art celestial navigation systems for spacecraft use advanced algorithms and technologies to achieve accurate position and orientation estimates, essential for deep-space missions. These systems predominantly use star trackers and sextants to observe celestial bodies and employ sophisticated filtering techniques like the Extended Kalman Filter (EKF) and the Unscented Kalman Filter (UKF) to handle nonlinearities and uncertainties inherent in the system \cite{ning2007autonomous, ning2008spacecraft}. Additionally, Particle Filters are implemented to manage non-Gaussian noise environments, providing robustness in complex scenarios \cite{ning2008spacecraft}. Since these approaches are based on several approximations, e.g., linearization, Gaussian uncertainty models, and sample approximations, they do not provide global performance guarantees. The practical effectiveness in specific applications is often validated through experimental setups and simulations tailored to the particular characteristics of the system being modeled. Our recent work has developed an LPV (linear parameter varying) framework for developing celestial navigation systems for cislunar applications \cite{eliot-jgcd} with guaranteed bounds on the estimator's error in the $\mathcal{L}_\infty/\mathcal{L}_2$ sense.

The issue with these methods is that the sensor precision needs to be specified as input to the design, and the estimator's accuracy depends on it. 
In celestial navigation systems designed for spacecraft, the configuration and quantity of sensors vary significantly based on mission objectives, spacecraft design, and the operational environment. Commonly incorporated sensors include multiple star trackers for reliable star positioning, which is essential for maintaining orientation. Sun and Earth sensors determine the spacecraft's orientation relative to the Sun and Earth, providing vital data for orbits close to Earth. Inertial Measurement Units (IMUs), which consist of accelerometers and gyroscopes, are necessary for tracking changes in attitude and velocity. Magnetometers are often used in Earth orbit to measure magnetic fields, aiding in attitude control. Additional sensors like optical devices for detailed celestial observations and radio navigation systems for precise positioning using Earth-based or inter-satellite signals may also be included. The key challenge here is to determine the precision of these sensors to meet a given mission's error budget. Often, unnecessarily high-precision sensors are used, resulting in expensive systems. 

This paper addresses the challenge by simultaneously optimizing sensor precision and synthesizing the corresponding estimator that ensures user-specified estimation accuracy. We address this issue by expanding the LPV formulation presented in \cite{eliot-jgcd} to include sensor precision optimization. To the best of our knowledge, this is the first paper that addresses this problem. 

\section{Specific Contributions}
The primary contributions of this paper are two new convex optimization formulations to determine a nonlinear state estimator in LPV form and minimum sensing precision that achieves a user-specified bound on estimation error. The results are presented for $\mathcal{H}_2$ and $\mathcal{H}_\infty$ robust estimation framework and detailed in \S\ref{sec:sparse_h2} and \S\ref{sec:sparse_hinf}.


\section{Problem Formulation}
\subsection{Brief Overview of LPV Systems}
Linear Parameter Varying (LPV)\cite{shamma1990analysis, shamma2012overview} control systems represent an advanced approach in control theory for handling nonlinear systems by employing a family of linear controllers. These controllers are scheduled based on varying parameters that are measurable in real time. The LPV framework extends the gain scheduling approach, which traditionally involves designing controllers at various operating points and interpolating between them as system parameters change. The interpolated controller does not have theoretical guarantees. However, the LPV framework provides theoretical guarantees since the design accounts for parameter variations. Several methodologies exist for designing LPV controllers, including \textit{linear fractional transformations (LFT)}, \textit{single quadratic Lyapunov function (SQLF)}, and \textit{parameter dependent quadratic Lyapunov function (PDQLF)}. These methodologies reformulate control design problems into convex optimization problems involving linear matrix inequalities (LMIs) \cite{el2000advances,helton2007linear}. The main challenge is representing the nonlinear system as a linear parameter-varying system. For example, a nonlinear system $\dot{x} = x^3$ can be written as $\dot{x} =A(\rho)x$, where $A(\rho) = \rho$ with $\rho(t) := x^2(t)$. When LPV models are affine in the parameter, i.e., $A(\rho) = A_0 + A_1\rho$, the constraints to guarantee performance for all variation in $\rho(t)$ are simpler. It is assumed $\rho(t)$ is in a convex polytope, and guarantees are established with constraints defined on the vertices of the polytope. If the dependence on $\rho(t)$ is nonlinear, randomized algorithms are often employed with probabilistic guarantees \cite{tempo2013randomized, fujisaki2003probabilistic}.  Often, additional parameters are introduced to force an affine structure, resulting in many parameters. The computational burden increases with problem size and is dominated by the worst-case complexity of LMIs are $\mathcal{O}(n^6)$, where $n$ is the problem size. Also, restricting $\rho(t)$ in a closed set is often not trivial. Besides these limitations, the LPV framework has been quite successful, particularly in the aerospace industry \cite{ganguli2002reconfigurable, balas2002linear, gilbert2010polynomial, marcos2009lpv}.

\subsection{LPV Observer Design}

We consider the following LPV system
\begin{subequations}
\begin{align}
 {\dot{x}}(t)&= {A}(\rhob)  {x}(t) +   {b}(\rhob) +  {B_w}(\rhob)  {w}(t),\\
 {y}(t) &= {C}(\rhob) {x}(t) +  {d}(\rhob) +  {D_w}(\rhob)  {w}(t),\\
 {z}(t) &= {C_z x}(t),
\end{align}
\elab{State_Space_LPV}
\end{subequations}
where \( {x}\in \mathbb{R}^{N_x}\), \( {y}\in \mathbb{R}^{N_y}\) are respectively the state vector, the vector of measured outputs, and the output vector of interest, and $\rho\in\mathbb{R}^{N_\rho}$ is the parameter on which the dynamics is scheduled. Also, \( {w} \in \mathbb{R}^{N_w}\) is the vector of the exogenous signals and is noted as:
\begin{equation}
 {w}(t)=
\begin{bmatrix}
 {d}(t) \\
 {n}(t)
\end{bmatrix},
\end{equation}  
where $d\in\real^{N_d}$ is the disturbance and $n\in\real^{N_y}$ is the measurement noise.

\begin{remark}
Note that the dynamics in \eqn{State_Space_LPV} has the terms ${b}(\rhob)$ and ${d}(\rhob)$, which admits nonlinear systems that cannot be expressed as LPV but as affine parameter varying systems. The nonlinear system in the classical cislunar CR3BP (circular restricted three-body problem) is one such system.
\end{remark}

The full-order \textit{nonlinear} state observer in LPV form for the system \eqn{State_Space_LPV} is given by
\begin{subequations}
\begin{align}
 {\dot{\hat{x}}}(t) &= \Big(A(\rhob) +  LC_y(\rhob)\Big)\hat{x}(t) -  Ly(t) +  \Big(b(\rhob) +  Ld(\rhob)\Big),\\
 {\hat{z}} &=  {C_z \hat{x}}(t),
\end{align}
\elab{estimator_dynamics}
\end{subequations}
where \( {\hat{x}} \in \mathbb{R}^{N_z}\) is the estimated state vector, \( {\hat{x}} \in \mathbb{R}^{N_z}\) is the estimate of the vector of interest, and \( {L} \in \mathbb{R}^{N_z \times N_y}\) is the observer gain. The error vector is defined as 
\begin{equation}
 {e}(t)=  {x}(t)- {\hat{x}}(t), \text{and }  {\epsilon}(t)=  {z(t)}- {\hat{z}}(t).
\end{equation}  

\begin{remark}
Note that the estimator dynamics in \eqn{estimator_dynamics} is different from the conventional formulation as it has the ${b}(\rhob)$ and ${d}(\rhob)$ terms to account for the affine form of the LPV system in \eqn{State_Space_LPV}.
\end{remark}
The error dynamics is then given by
\begin{subequations}
\begin{align}
{\dot{e}}(t) &= (A(\rhob)+LC_y(\rhob)) e(t) \notag \\ & + (\begin{bmatrix}B_d(\rhob)S_d & 0\end{bmatrix} + L\begin{bmatrix}D_d(\rhob)S_d & S_n\end{bmatrix})\bar{w}(t), \\
{\epsilon}(t) &= C_z {e}(t),
\end{align} 
\end{subequations}
where $\bar{w}$ is the normalized exogeneous input with $\|\bar{w}\|_2 = 1$. The variables $S_d$ and $S_n$ scale the inputs to the physical space. In particular, $S_n$ scales the sensor noise, which is an optimization variable in the subsequent formulation.

The transfer function from $\bar{w}(t)$ to error $\epsilon(t)$ is then given by,
\begin{equation}
G_{ \bar{w} \rightarrow \epsilon}(s,\rho) := C_z(sI_{Nx}-A(\rhob)-LC_z(\rhob))^{-1}(B_{ \bar{w}}+LD_{ \bar{w}}),
\end{equation}  
where \(B_{ \bar{w}}(\rhob) = [B_d(\rhob) S_d \textbf{ } 0]\) and  \(D_{ \bar{w}}(\rhob) = [B_d(\rhob) S_d \textbf{ } S_n]\).

We are interested in determining the gain $L$, such that $\|G_{ \bar{w} \rightarrow \epsilon}(s,\rho)\|_p<\gamma$ for $p=2,\infty$, while maximizing the noise in the sensor. The sensor noise $n(t):=S_n\bar{n}(t)$, where $\|\bar{n}(t)\|_2=1$ and $S_n := \diag{\begin{matrix}\kappa_1 & \cdots & \kappa_{N_y}\end{matrix}}$, with $\kappa_i:= 1/\|n_i(t)\|_2$ defined as the precision of the $i^\text{th}$ sensor. We also define $\beta_i := \kappa_i^2$, and the corresponding vector $\beta := \begin{bmatrix} \beta_1 & \cdots & \beta_{N_y}\end{bmatrix}^T$. It turns out that the problem is convex in $\beta$ for $p=2,\infty$, as shown in the next two sections.

\subsection{Optimal Sensing Precision for $\mathcal{H}_2$-LPV Observers} \label{sec:sparse_h2}
\begin{theorem}
The optimal sensing precision and observer gain for an LPV estimator with $\|G_{ \bar{w} \rightarrow \epsilon}(s,\rho)\|_2<\gamma$ is given by the convex optimization problem
\begin{equation}
    \left.
        \begin{aligned}
            &\min_{Y,Q>0,X>0,\beta>0} \quad  ||\beta||_p \quad \text{such that},  \\ 
            &M_{11} = XA(\rhob) + YC_y(\rhob) \\ & \quad  \quad +(XA((\rhob)+YC_y((\rhob))^T, \\ 
            &M_{12} = XB_d(\rhob)S_d + YCD_d(\rhob)S_d, \\
            &
            \begin{bmatrix}
                M_{11} & M_{12} & Y \\ 
                M_{12}^T & -I_{N_d} & 0 \\ 
                Y^T & 0 & -\text{diag}(\beta)
            \end{bmatrix} < 0, \\
            & 
            \begin{bmatrix}
                -Q & C_z(\rhob) \\ 
                C_z^T(\rhob) & -X 
            \end{bmatrix} < 0, \\
            &\text{trace}(Q) < \gamma^2
        \end{aligned}
    \right\}  
\label{proof 1 1st lmi}
\end{equation}
where the optimal sensing precision $\kappa_i = \sqrt{\beta_i}$, and $L := X^{-1}Y$.
\end{theorem}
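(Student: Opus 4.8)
The plan is to derive the program from the standard Gramian LMI characterization of the $\mathcal{H}_2$ norm of the error system $G_{\bar w\to\epsilon}$, and then to convexify it by a congruence transformation, the linearizing change of variables $Y := XL$, and two Schur complements, with the precision vector $\beta$ arranged to enter linearly through the choice of the noise scaling $S_n$.

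First I would collect the closed-loop data of the error dynamics: the state matrix $A_{cl}(\rho) := A(\rho) + LC_y(\rho)$, the performance output $C_z(\rho)$, and the normalized input matrix $B_{cl}(\rho) := \begin{bmatrix} B_d(\rho)S_d + LD_d(\rho)S_d & LS_n\end{bmatrix}$, where the sensor-noise scaling is $S_n = \diag{1/\kappa_1,\ldots,1/\kappa_{N_y}}$ (noise amplitude inversely proportional to precision) so that $S_nS_n^T = \diag{\beta}^{-1}$. Next, invoke the standard equivalence (see, e.g., \cite{el2000advances}): $\|G_{\bar w\to\epsilon}(\cdot,\rho)\|_2<\gamma$ if and only if there exist $P\succ 0$ and a slack $Q$ with $A_{cl}(\rho)P + PA_{cl}(\rho)^T + B_{cl}(\rho)B_{cl}(\rho)^T \prec 0$, $C_z(\rho)PC_z(\rho)^T \prec Q$, and $\text{trace}(Q)<\gamma^2$; the strict first inequality already certifies that $A_{cl}(\rho)$ is Hurwitz, so no separate stability step is needed. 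In the SQLF (single quadratic Lyapunov function) setting one seeks a single $P$ and gain $L$ valid for every admissible $\rho$; since $A(\rho),C_y(\rho),B_d(\rho),D_d(\rho),C_z(\rho)$ are affine in $\rho$ while $P$ and $L$ are parameter-independent, the inequalities are affine in $\rho$, and it is lossless to enforce them only at the vertices of the parameter polytope -- the statement is written generically in $\rho$ with this understanding.

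The convexification then runs as follows. Congruence-transform the first inequality by $X := P^{-1}\succ 0$ (pre- and post-multiplication), substitute $Y := XL$ so that $XA_{cl}(\rho) = XA(\rho)+YC_y(\rho)$ and $XB_{cl}(\rho) = \begin{bmatrix}M_{12} & YS_n\end{bmatrix}$ with $M_{12} := XB_d(\rho)S_d + YD_d(\rho)S_d$; the transformed inequality is $M_{11} + M_{12}M_{12}^T + Y\,\diag{\beta}^{-1}\,Y^T \prec 0$ with $M_{11} := XA(\rho)+YC_y(\rho) + (\cdot)^T$, and a Schur complement on the two quadratic terms produces exactly the first $3\times 3$ block LMI, the key point being that the pivot $-\diag{\beta}$ that lands in the $(3,3)$ position is \emph{linear} in $\beta$. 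For the Gramian/trace constraint, substituting $P = X^{-1}$ turns $C_z(\rho)PC_z(\rho)^T \prec Q$ into $C_z(\rho)X^{-1}C_z(\rho)^T \prec Q$, and one more Schur complement gives $\begin{bmatrix}-Q & C_z(\rho)\\ C_z^T(\rho) & -X\end{bmatrix}\prec 0$. All constraints are now jointly linear in $(Y,Q,X,\beta)$ and the objective $\|\beta\|_p$ is convex, so the program is convex; finally $L = X^{-1}Y$ (well-defined since $X\succ 0$) and $\kappa_i = \sqrt{\beta_i}$ recover the design.

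The one genuinely nontrivial ingredient -- everything else being standard LMI bookkeeping -- is making the problem jointly convex in the observer-gain variable and the sensing-precision variable simultaneously, and in particular preserving convexity when $L$ and $\beta$ both appear. The bilinear coupling $XL$ is removed by $Y=XL$ as usual, but the noise channel additionally contributes $YS_nS_n^TY^T$, bilinear in $Y$ and the precisions; the resolution is the modeling choice $S_nS_n^T = \diag{\beta}^{-1}$, so that the Schur complement leaves $\diag{\beta}$ -- not its inverse -- as an affine block. I would also take care to (i) check the directions of the two Schur complements, noting the pivots $-I_{N_d}$, $-\diag{\beta}$, and $-X$ are all negative definite so the equivalences carry no sign caveats, (ii) confirm that vertex enforcement of the LMIs is equivalent to the ``for all $\rho$'' requirement under the affine-parameter-dependence of \eqn{State_Space_LPV}, and (iii) record that minimizing $\|\beta\|_p$ (e.g., $p=1$ promoting sparsity, so a vanishing $\beta_i$ forces the corresponding column of $Y$, hence of $L$, to zero -- the $i$-th sensor switched off) is exactly the ``maximize sensor noise / use minimal measurements'' objective.
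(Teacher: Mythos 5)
Your proposal is correct and follows essentially the same route as the paper: the Gramian/trace LMI characterization of the $\mathcal{H}_2$ bound, congruence by $P^{-1}$ with the substitutions $X=P^{-1}$ and $Y=XL$, and two Schur complements, with $\beta$ entering affinely through the noise-scaling block and the parameter dependence handled by vertex enforcement over the polytope. If anything, your treatment is cleaner on one point the paper glosses over: you make explicit that $S_nS_n^T$ must equal $\textbf{diag}(\beta)^{-1}$ (noise amplitude inversely proportional to precision $\kappa_i$) for the $(3,3)$ pivot to come out as $-\textbf{diag}(\beta)$, whereas the paper's stated convention $S_n=\textbf{diag}(\kappa_1,\ldots,\kappa_{N_y})$ is not consistent with the final LMI it derives.
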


\begin{proof} The condition \(||G_{ \bar{w}\rightarrow\epsilon}(s)||_2<\gamma\) is equivalent to the existence of a symmetric matrix \(P>0\) such that \cite{9121301} 

\begin{subequations}
\begin{equation}
\begin{split}
(A+LC_y(\rhob)P+P(A+LC_y(\rhob))^T \\
+ (B_{ \bar{w}}(\rhob)+LB_{ \bar{w}}(\rhob))(B_{ \bar{w}}(\rhob)+LB_{ \bar{w}}(\rhob))^T < 0 
\label{proof 1 h2}
\end{split}
\end{equation}
\begin{equation}
\text{trace}(C_z(\rhob)PC_z^T(\rhob)) < \gamma
\elab{proof 2 h2}
\end{equation}
\end{subequations}

Pre- and post-multiplying \ref{proof 1 h2} by \(P^{-1}\) gives

\begin{equation}
\begin{split}
P^{-1}(A+LC_y(\rhob)+(A+LC_y(\rhob))^T P^{-1} \\
+ P^{-1}(B_{ \bar{w}}(\rhob)+LB_{ \bar{w}}(\rhob))(B_{ \bar{w}}(\rhob)+LB_{ \bar{w}}(\rhob))^T P^{-1} < 0 
\end{split}
\end{equation}

Letting \(X:=P^{-1}\) and \(Y:=XL\) in the previous equation to get
\begin{equation}
\begin{split}
(XA+YC_y(\rhob)+(XA+YC_y(\rhob))^T \\
+ (XB_{ \bar{w}}(\rhob)+YB_{ \bar{w}}(\rhob))(XB_{ \bar{w}}(\rhob)+YB_{ \bar{w}}(\rhob))^T < 0 
\end{split}
\elab{equality1}
\end{equation}
Then using the definition of \(B_{ \bar{w}}(\rhob)\) and \(D_{ \bar{w}}(\rhob)\), and  \(M_{11}:= (XA+YC_y(\rhob)+(XA+YC_y(\rhob))^T\), and defining \(M_{12} := XB_d(\rhob)S_d + YCD_d(\rhob)S_d\) the
inequality in \eqn{equality1} can be written as 

\begin{subequations}
\begin{equation}
    M_{11} + \begin{bmatrix}M_{12} & YS_n\end{bmatrix} 
    \begin{bmatrix}
        M^T_{12} \\
        S_n^TY^T
    \end{bmatrix} < 0, \text{or}
\end{equation}
\begin{equation}    
    M_{11} + [M_{12} \text{  } YS_n]
    \begin{bmatrix}
        I_{N_d} & 0 \\
        0 & S_n S_n^T
    \end{bmatrix}
    \begin{bmatrix}
        M^T_{12} \\
        S_n^TY^T
    \end{bmatrix} < 0.
\end{equation}
\end{subequations}

Applying Schur's complement and substituting $\beta_i := \kappa_i^2$ the inequality becomes,
\begin{equation}
\begin{bmatrix}
    M_{11} & M_{12} & Y \\ 
    M_{12}^T & -I_{N_d} & 0 \\ 
    Y^T & 0 & -\text{diag}(\beta)
\end{bmatrix} < 0, \\
\elab{lmi1}
\end{equation}
Now applying Schur's complement to the inequality \eqn{proof 2 h2}, we get \(C_z(\rhob)PC_z^T(\rhob)-Q <0\) and \(\text{trace}(Q) < \gamma^2\), for $Q>0$. Finally, using Schur's complement lemma and substituting \(P^{-1} = X\) we get
\begin{equation}
\begin{bmatrix}
    -Q & C_z(\rhob) \\ 
    C_z^T(\rhob) & -X 
\end{bmatrix} < 0, \text{trace}(Q) < \gamma^2.
\elab{lmi2}
\end{equation}

Therefore the convex optimization formulation is given by
\begin{align*}
\min_{Y,Q>0,X>0,\beta>0}, \text{subject to \eqn{lmi1} and \eqn{lmi2}}      
\end{align*}
\end{proof}

\subsection{Optimal Sensing Precision for $\mathcal{H}_\infty$-LPV Observers} \label{sec:sparse_hinf}
\label{sec:sparse_hinf}
\begin{theorem}
The optimal sensing precision and observer gain for an LPV estimator with $\|G_{ \bar{w} \rightarrow \epsilon}(s,\rho)\|_\infty<\gamma$ is given by the convex optimization problem
\begin{equation}
    \left.
        \begin{aligned}
            &\min_{Y,Q>0,X>0,\beta>0} \|\beta\|_p \text{ such that},  \\ 
            &M_{11}(\rhob) = \sym{XA(\rhob) + YC_y(\rhob)},\\
            &M_{12}(\rhob) = XB_d(\rhob)S_d + YCD_d(\rhob)S_d, \\
            &
            \begin{bmatrix}
    M_{11}(\rhob) & M_{12}(\rhob) & C_z^T(\rhob) & Y \\ 
    M_{12}^T(\rhob) & -\gamma^2 I_{N_d} & 0 & 0 \\ 
    C_z(\rhob) & 0 & - I_{N_z} & 0 \\ 
    Y^T & 0 & 0 & -\gamma^2\textbf{diag}(\beta)
\end{bmatrix} < 0,
        \end{aligned}
    \right\}  
\label{proof 2 1st lmi}
\end{equation}
where the optimal precision is given by $\kappa_i = \sqrt{\beta_i/\gamma}$, and \(L = X^{-1}Y\). 
\end{theorem}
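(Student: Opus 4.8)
The plan is to mirror the $\mathcal{H}_2$ proof, replacing the Lyapunov-plus-trace certificate by the $\mathcal{H}_\infty$ bounded real lemma. First I would write the error system compactly as $\dot{e}=\tilde{A}(\rhob)e+\tilde{B}(\rhob)\bar{w}$, $\epsilon=C_z(\rhob)e$, with $\tilde{A}:=A(\rhob)+LC_y(\rhob)$ and $\tilde{B}:=B_{\bar{w}}(\rhob)+LD_{\bar{w}}(\rhob)$. Since $G_{\bar{w}\to\epsilon}$ is strictly proper, the bounded real lemma (in the same Lyapunov-shaping form used for the $\mathcal{H}_2$ bound, possibly after a Schur complement to remove the quadratic terms) characterizes $\|G_{\bar{w}\to\epsilon}(s,\rho)\|_\infty<\gamma$ through the existence of a symmetric $P>0$ solving a $3\times 3$ block matrix inequality built from $\tilde{A}(\rhob)$, $\tilde{B}(\rhob)$, $C_z(\rhob)$ and $P$, carrying $-\gamma^2 I$ on the disturbance channel and $-I_{N_z}$ on the performance channel. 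Exactly as in the $\mathcal{H}_2$ proof I would then apply the congruence/substitution $X:=P^{-1}$, $Y:=XL$ (equivalently $L=X^{-1}Y$); because $L$ enters only through the products $LC_y$ and $LD_{\bar{w}}$, every block becomes affine in $(X,Y)$, the $(1,1)$ block being $M_{11}(\rhob)=\sym{XA(\rhob)+YC_y(\rhob)}$ and the disturbance off-diagonal block being $X\tilde{B}(\rhob)=XB_{\bar{w}}(\rhob)+YD_{\bar{w}}(\rhob)$.

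Next I would use the structure $B_{\bar{w}}(\rhob)=[\,B_d(\rhob)S_d\ \ 0\,]$ and $D_{\bar{w}}(\rhob)=[\,D_d(\rhob)S_d\ \ S_n\,]$ to split the disturbance channel of the transformed LMI into two block rows and columns: the disturbance part contributes $M_{12}(\rhob)=XB_d(\rhob)S_d+YD_d(\rhob)S_d$ against a block $-\gamma^2 I_{N_d}$, while the sensor-noise part contributes $YS_n$ against a block $-\gamma^2 I_{N_y}$. A diagonal congruence that rescales the sensor-noise row and column by $S_n^{-1}$ then replaces $YS_n$ by $Y$ and turns that block into $-\gamma^2 S_n^{-2}$; collecting the four resulting blocks — $M_{11}(\rhob)$, the disturbance block, the performance block with off-diagonal $C_z^\top(\rhob)$, and the sensor block with off-diagonal $Y$ — reproduces precisely the $4\times 4$ LMI in the statement once the sensor block is identified with $-\gamma^2\diag{\beta}$. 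That identification is the origin of the recovery formula: the entries of $\beta$ absorb the diagonal of $S_n^{-2}$ together with the $\gamma$-normalization carried through the bounded real lemma, so the physical precisions are read back as $\kappa_i=\sqrt{\beta_i/\gamma}$ and the observer gain as $L=X^{-1}Y$.

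Finally I would note that, for a fixed performance level $\gamma$, every entry of the resulting inequality is affine in the decision variables $X$, $Y$, and $\beta$ — in particular the block $-\gamma^2\diag{\beta}$ is linear in $\beta$ — while the objective $\|\beta\|_p$ is convex, so the synthesis problem is a genuine convex program; unlike in the $\mathcal{H}_2$ case no auxiliary variable $Q$ or trace constraint is needed here, since the $\mathcal{H}_\infty$ bound is captured by the single LMI, and the parameter dependence is handled in the standard affine-LPV way by enforcing the LMI at the vertices of the polytope containing $\rho(t)$. The step I expect to be the main obstacle is the $\gamma$-bookkeeping in the congruence and scaling manipulations: one must choose the normalization of the bounded real lemma certificate and the diagonal congruence on the sensor channel consistently so that the disturbance block comes out as $-\gamma^2 I_{N_d}$, the performance block as $-I_{N_z}$, and the sensor block as $-\gamma^2\diag{\beta}$ simultaneously — it is exactly this coupling of scalings that forces the identification $\kappa_i=\sqrt{\beta_i/\gamma}$ rather than $\kappa_i=\sqrt{\beta_i}$, and a mismatch there merely rescales the recovered precisions by a power of $\gamma$.
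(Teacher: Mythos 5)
Your proposal is correct and follows essentially the same route as the paper: the bounded real lemma certificate, the linearizing substitution $Y=XL$, splitting the exogenous channel via $B_{\bar w}=[\,B_dS_d\ \ 0\,]$ and $D_{\bar w}=[\,D_dS_d\ \ S_n\,]$, and a Schur complement yielding the $4\times4$ LMI with the sensor block absorbed into $-\gamma^2\diag{\beta}$. Your only divergence is cosmetic — you move $S_n$ off the $Y$ block by a diagonal congruence, whereas the paper folds $S_nS_n^T$ into a weight matrix $R$ inside the quadratic form before Schur-complementing — and your closing caveat about the $\gamma$- and $S_n$-normalization bookkeeping is well placed, since that is exactly where the paper's own proof is least tidy.
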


\begin{proof}
The condition $\|G_{\bar{w}\rightarrow\epsilon}(s,\rho)\|_\infty<\gamma$ is equivalent to a symmetric matrix \(X > 0\) such that \cite{9121301}
\begin{equation}
\begin{bmatrix}
    \sym{XA(\rhob) + XLC_y(\rhob)} & \\
     + C_z(\rhob)C_z(\rhob)^T & X(B_{ \bar{w}}(\rhob)+LD_{ \bar{w}}(\rhob)) \\
    (B_{ \bar{w}}(\rhob)+LD_{ \bar{w}}(\rhob))^T X & -\gamma^2I_{(N_d + N_y)}.
\end{bmatrix}<0
\label{lmi proof 3}
\end{equation}
Defining \(Y := XL\) and the \ref{lmi proof 3} equation becomes
\begin{equation}
\begin{bmatrix}
    \sym{XA(\rhob) + XLC_y(\rhob)} & \\
     + C_z(\rhob)C_z(\rhob)^T & XB_{ \bar{w}}(\rhob)+YD_{ \bar{w}}(\rhob) \\
    (XB_{ \bar{w}}(\rhob)+YD_{ \bar{w}}(\rhob))^T X & -\gamma^2I_{(N_d + N_y)},
\end{bmatrix}<0
\label{lmi proof 4}
\end{equation}
where $\sym{\cdot} := (\cdot) + (\cdot)^T$. 

Using Schur's compliment, the inequality can be rewritten as
\begin{align}
&\sym{XA(\rho)+YC_y(\rho)} + C_{z}^T(\rho)C_{z}(\rho) \notag \\ &+ \gamma^{-2}(XB_{\bar{w}}(\rho) + YD_{ \bar{w}}(\rho))(XB_{\bar{w}}(\rho) + YD_{ \bar{w}}(\rho))^T
< 0
\label{lmi part 1}
\end{align}
Using the definitions of \(B_{ \bar{w}}(\rho)\) and \(D_{ \bar{w}}(\rho)\) as well as defining \(M_{11}(\rhob):= \textbf{sym}(XA(\rho)+YC_y(\rho))\), and \(M_{12}(\rhob):= XB_d(\rho)S_d + YB_d(\rho)S_d\), the inequality \ref{lmi part 1} becomes

\begin{equation}
M_{11}(\rhob) + \begin{bmatrix}
M_{12}(\rhob) & C_z^T(\rho) & Y
\end{bmatrix} R 
\begin{bmatrix}
M_{12}^T(\rhob) \\
C_Z(\rho)\\
Y^T
\end{bmatrix} < 0.
\end{equation}
Where \(R := \textbf{diag}(\gamma^{-2}I_{N_d},I_{N_z},\gamma^{-2}S_n S_n^T)\). we can then use the fact \(\kappa^2 = \beta = S_n S_n^T\) and then use Schur's compliment to get. 
\begin{equation}
\begin{bmatrix}
    M_{11}(\rhob) & M_{12}(\rhob) & C_z^T(\rhob) & Y \\ 
    M_{12}^T(\rhob) & -\gamma^2 I_{N_d} & 0 & 0 \\ 
    C_z(\rhob) & 0 & - I_{N_z} & 0 \\ 
    Y^T & 0 & 0 & -\gamma^2\textbf{diag}(\beta)
\end{bmatrix} < 0.
\label{proof 2 second to last lmi}
\end{equation}



Therefore the convex optimization formulation in \ref{proof 2 second to last lmi} is the same as the one shown in \ref{proof 2 1st lmi}, i.e., \begin{align*}
\min_{Y,Q>0,X>0,\beta>0}, \text{subject to \ref{proof 2 second to last lmi}}      
\end{align*}

\end{proof}

\begin{remark}
The cost function in the above optimization problems is given by $\|\beta\|_p$. If $p=1$, the optimal solution will be sparse if it exists. This may be useful if a sensor selection problem is solved, since sensors with zero precisions can be eliminated from the design.
\end{remark}

\begin{remark}
The LMIs in the above optimization problems are affine functions of $\rho$. We assume that $\rho$ is in a convex polytope. Enforcing the LMIs at the vertices of the polytope guarantees that the LMIs are satisfied for all $\rho$ in the convex polytope \cite{apkarian1995self}.
\end{remark}

\section{Results}
\subsection{Representation of Cislunar Dynamics in LPV Form}
We consider the classical circular restricted three-body problem (CR3BP). This simplified celestial mechanics model describes the motion of a small body under the gravitational influence of two larger bodies that are orbiting each other in circular orbits. The two larger bodies, called the primary bodies, are usually much more massive than the small body and are assumed to follow fixed circular orbits around their common center of mass due to their gravitational interaction. The small body, which is considered to have negligible mass compared to the other two, does not affect their motion. In the cislunar case, the larger bodies are the Earth and the Moon, and the smaller body is the spacecraft or any resident space object.

The equations of motion for the CR3BP are,
\begin{subequations}
\begin{equation}
\ddot{\Bar{x}} - 2\dot{\Bar{y}} - \Bar{x} = -\frac{1 - \pi_2}{\sigma^3}(\Bar{x} + \pi_2) - \frac{\pi_2}{\psi^3}(\Bar{x} - 1 + \pi_2), 
\end{equation}
\begin{equation}
\ddot{\Bar{y}} + 2\dot{\Bar{x}} - \Bar{y} = -\frac{1 - \pi_2}{\sigma^3}\Bar{y} - \frac{\pi_2}{\psi^3}\Bar{y}, 
\end{equation}
\elab{cr3bp}
\end{subequations}
where \(\pi_1 = \frac{m_1}{m_1 + m_2}, \quad \pi_2 = \frac{m_2}{m_1 + m_2},\) and  
\begin{align*}
    \sigma & := \|\vo{\sigma}\|_2\ , \text{where}  \quad\vo{\sigma} = \frac{\mathbf{r_{13}}}{r_1{}_2} = (\Bar{x} + \pi_2)\hat{i} + \Bar{y}\hat{j}, \\  \psi & := \|\vo{\psi}\|_2\
, \text{where} \quad \vo{\psi} = \frac{\mathbf{r_{23}}}{r_1{}_2} = (\Bar{x} + \pi_2 - 1)\hat{i} + \Bar{y}\hat{j},
\end{align*} \\
and $r_{12}$ is the distance between the Earth and the Moon. The coordinates are normalized with respect to $r_{12}$, that is, \(\Bar{x}\), \(\Bar{y}\) are \(\frac{x}{r_1{}_2}\), \(\frac{y}{r_1{}_2}\), respectively.
We define parameters $\rho := \begin{bmatrix}\rho_1, \cdots,\rho_6 \end{bmatrix}^T$ where \(\rho_1 = \frac{1}{\sigma^3}\), \quad \(\rho_2 = \frac{1}{\psi^3}\), \quad \(\rho_3 = \frac{1}{\sigma}\), \quad \(\rho_4 = \frac{1}{\psi}\).  
The nonlinear equations in \eqn{cr3bp} can now be written as the following LPV system (with affine dependence on the parameters),
\begin{equation}
\begin{split}
\begin{bmatrix}
\dot{\bar{x}} \\
\dot{\bar{y}} \\
\ddot{\bar{x}} \\
\ddot{\bar{y}} 
\end{bmatrix}
&=
\begin{bmatrix}
0 & 0 & 1 & 0  \\
0 & 0 & 0 & 1  \\
\rho_1(\pi_2-1) + 1 & 0 & 0 & 2  \\
0 & \rho_1(\pi_2-1) + 1 & -2 & 0  
\end{bmatrix}
\begin{bmatrix}
\bar{x} \\
\bar{y} \\
\dot{\bar{x}} \\
\dot{\bar{y}} 
\end{bmatrix} \\
& +
\begin{bmatrix}
0 & 0 & 0 & 0  \\
0 & 0 & 0 & 0  \\
 - \rho_2\pi_2 & 0 & 0 & 0  \\
0 & - \rho_2\pi_2 & 0 & 0  
\end{bmatrix}
\begin{bmatrix}
\bar{x} \\
\bar{y} \\
\dot{\bar{x}} \\
\dot{\bar{y}} 
\end{bmatrix} \\
 & + 
\begin{bmatrix}
0 \\
0 \\
\pi_2(\rhob_1-\rhob_2)(\pi_2-1.0) \\
0 
\end{bmatrix}.
\label{state_space_lpv_1}
\end{split}
\end{equation}%
The sensor model is defined as
\begin{equation}
\begin{split}
\begin{bmatrix}
\sin{\theta_1} \\
\cos{\theta_1} \\
\sin{\theta_2} \\
\cos{\theta_2} \\
r_{13}^2 \\
r_{23}^2
\end{bmatrix}
& =
\begin{bmatrix}
0 & \rho_3  \\
\rho_3 & 0  \\
0 & \rho_4  \\
-\rho_4 & 0  \\
\rho_5+2\pi_2 & \rho_6  \\
\rho_5+2\pi_2-2 & \rho_6  
\end{bmatrix}
\begin{bmatrix}
\Bar{x} \\
\Bar{y} 
\end{bmatrix} 
+
\begin{bmatrix}
0 \\
\rho_3\pi_2 \\
0 \\
-\rho_4(\pi_2-1) \\ 
\pi_2^2 \\ 
(\pi_2-1)^2 
\end{bmatrix},
\end{split}
\label{state_space_lpv_2}
\end{equation}
where $\theta_1$ and $\theta_2$ are angles derived in \cite{eliot-jgcd}. With the sensor model implemented the LPV state space simplifies to:
\begin{equation}
\begin{split}
A(\rhob) &= \begin{bmatrix}
0 & 0 & 1 & 0  \\
0 & 0 & 0 & 1  \\
a_{31} & 0 & 0 & 2  \\
0 & a_{42} & -2 & 0
\end{bmatrix}, \\
b(\rhob) &=
\begin{bmatrix}
0 \\
0 \\
\pi_2(\rhob_1-\rhob_2)(\pi_2-1.0) \\
0 
\end{bmatrix}, \\
C_y(\rhob) &= \begin{bmatrix}
0 & \rhob_3 & 0 & 0 \\
\rhob_3 & 0 & 0 & 0 \\
0 & \rhob_4 & 0 & 0 \\
-\rhob_4 & 0 & 0 & 0 \\
\rhob_5+2\pi_2 & \rhob_6 & 0 & 0 \\
\rhob_5+2\pi_2-2 & \rhob_6 & 0 & 0 
\end{bmatrix}, \\
d(\rhob) &= \begin{bmatrix}
0 \\
\rhob_3 \\
0 \\
-\rhob_4 \\
\rhob_5+2\pi_2 \\
\rhob_5+2\pi_2-2  
\end{bmatrix},
C_z = \begin{bmatrix}
1 & 0 & 0 & 0 \\
0 & 1 & 0 & 0
\end{bmatrix},
\end{split}
\label{eqn:State_Space}
\end{equation}
where $a_{31} := \rhob_1(\pi_2-1)-\rhob_2\pi_2 + 1$, and $a_{42} := \rhob_1(\pi_2-1) - \rhob_2\pi_2 + 1$.

Note: This system is affine in \(\rho\), meaning that we can use the LMI's derived in \eqref{proof 1 1st lmi} and \eqref{proof 2 1st lmi} to solve the set of LMI's based on the bounds of \(\rho\) to find the maximum sensor degradation this state space can handle for a given performance metric.

\subsection{Simulation Results}
Utilizing the formulation in Theorems 1 and 2, we get the sensing precisions shown in \fig{gamma_kval}.

\begin{figure}[h!]
\centering
\includegraphics[width=0.75\linewidth]{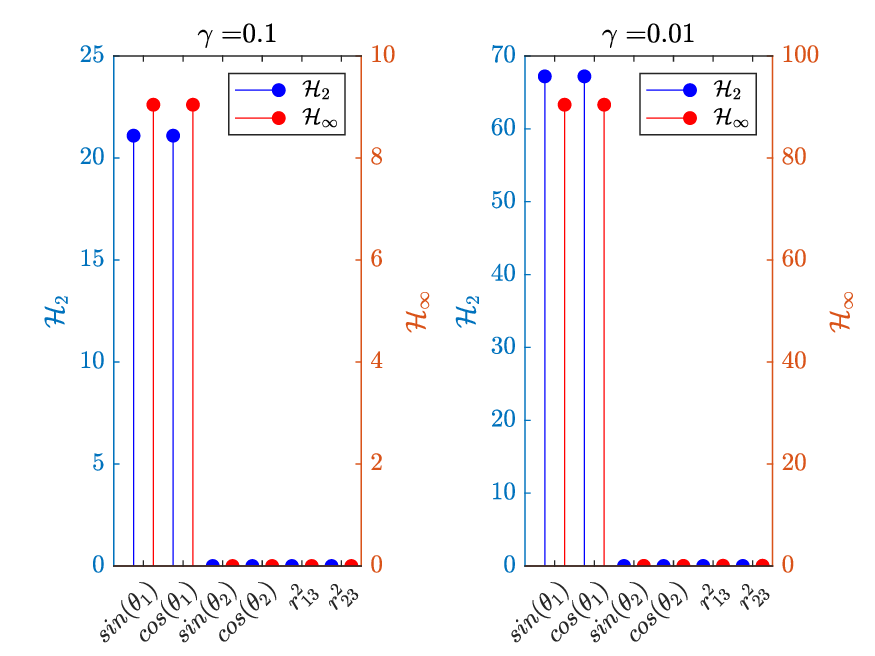}
\caption{Optimal sensing precision $\kappa$.}
\flab{gamma_kval}
\end{figure}

 We observe that the $\mathcal{H}_2$ and $\mathcal{H}_\infty$ observers only require sensors that provide $\sin(\theta_1)$ and $\cos(\theta_1)$. In practice however, $\theta$ is measured from optical systems. Noting that the precision for $\sin(\theta_1)$ and $\cos(\theta_1)$ are identical, we can use either to compute the allowable noise:
\begin{equation}
\begin{split}
\theta_1 & = \sin^{-1}\left(\frac{1}{\kappa_1}\right), \text{ or } \\
\theta_1 & = \frac{\pi}{2} - \cos^{-1}\left(\frac{1}{\kappa_2}\right).
\end{split}
\label{noise_bounds}
\end{equation}
For a \(\gamma = 0.0013\), the maximum sensor noise level that the system can take was found to be around \(0.1^\circ\) for \(\mathcal{H}_2\) and \(0.08^\circ\) for \(\mathcal{H}_\infty\). 

While the optimization indicates that the observer only needs  $\theta_1$ to estimate the states, we need to compute $\rho$ to implement the LPV observer dynamics in practice. It is not possible to determine $\rho$ with only $\theta_1$ information. Therefore, results in \fig{fig:main} utilize both $\theta_1$ and $\theta_2$ to fully calculate $\rho$. The \(0.1^\circ\) of noise calculated in (\ref{noise_bounds}) was applied to both $\theta_1$ and $\theta_2$ for both $\mathcal{H}_2$ and $\mathcal{H}_\infty$. 

Now because $\mathcal{H}_\infty$ simulation uses a larger noise that what was determined, the total error is expected to be larger for $\mathcal{H}_\infty$ which is seen in \fig{h2hinf error plots}. Additionally, due to the singularities introduced by the inverse of \(\sin(\theta_3)\) to find $\rho$ values, the system relies on the observer state estimates to calculate the $\rho$ values in intervals where \(\sin(\theta_3)\) approaches zero. Here $\theta_3 := \pi - \theta_1 - \theta_2$. See  \cite{eliot-jgcd} for details. For this simulation, we chose this region as \(|\sin(\theta_3)|\) < 0.04.

\begin{figure}[htbp]
    \centering
    \begin{subfigure}[b]{0.4\textwidth}
    \centering
    \includegraphics[width=0.8\textwidth]{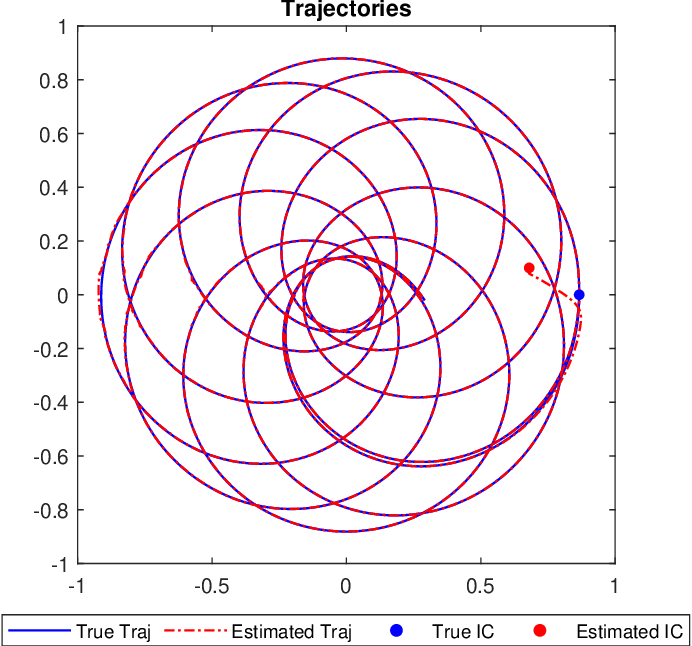}
    \caption{Cislunar trajectory with a $\mathcal{H}_2$ observer with \(0.1^\circ\) noise.}
    \flab{h2 trajectory plots for 0.1 degrees}
    \end{subfigure}
    \hfill
    \begin{subfigure}[b]{0.4\textwidth}
    \centering
    \includegraphics[width=0.8\textwidth]{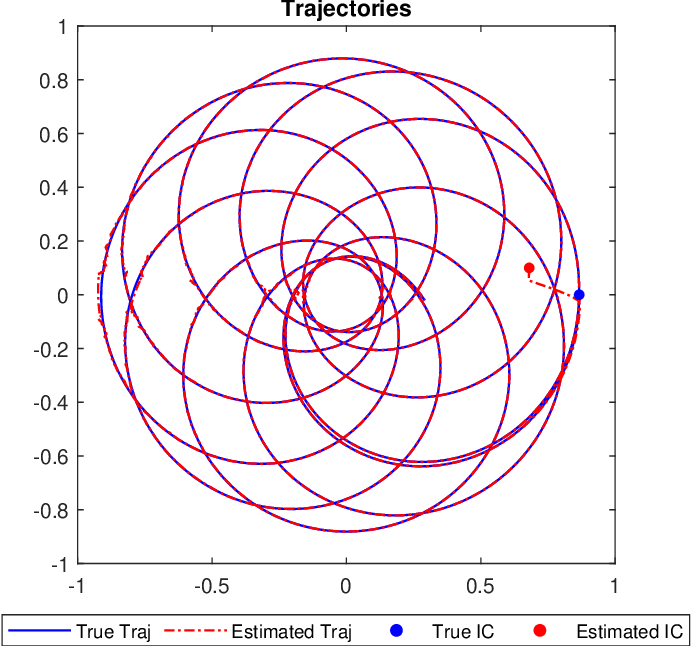}
    \caption{Cislunar trajectory with a $\mathcal{H}_\infty$ observer with \(0.1^\circ\) noise.}
    \flab{hinf trajectory plots for 0.1 degrees}
    \end{subfigure}
    \begin{subfigure}[b]{0.45\textwidth}
    \centering
    \includegraphics[width=\textwidth]{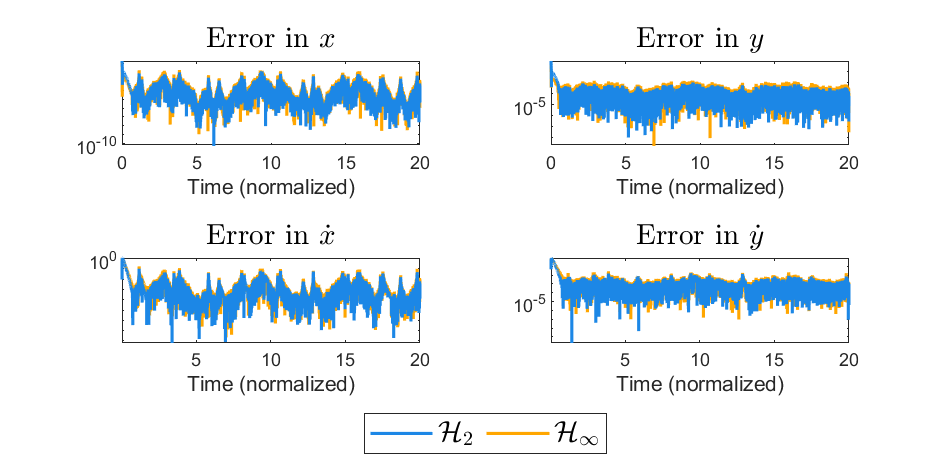}
    \caption{Error comparison between $\mathcal{H}_2$ and $\mathcal{H}_\infty$.}
    \flab{h2hinf error plots}
    \end{subfigure}
    \caption{Results for observers in cislunar space.}
    \flab{fig:main}
\end{figure}
The results in \fig{fig:main} confirm that the $\mathcal{H}_2$ and $\mathcal{H}_\infty$ observers can function without range sensors. Additionally, despite large initial state estimation errors, the observers rapidly converge and maintain accurate estimates with 360 arcseconds ($0.1^\circ$) of noise. 
\vspace{-0.1in}
\section{Conclusions}
In this paper, we presented two innovative convex optimization formulations that simultaneously optimize the $\mathcal{H}_2/\mathcal{H}_\infty$ observer gain and the precision of sensing, guaranteeing a user-specified bound on the estimation error for nonlinear systems modeled as LPV systems. These formulations were applied to the design of an onboard celestial navigation system for cislunar operations. The performance of these methods was confirmed by their ability to accurately predict the position of a spacecraft with minimal sensing. However, the current formulations do not account for sensing occlusions, which may require the integration of additional sensors. The precision of sensing defined by our formulations can guide the design and production of sensing hardware, potentially leading to optimized hardware configurations. In summary, the algorithms formulated in this study offer a robust framework for designing nonlinear state observers in LPV form. These observers are equipped with minimal sensing requirements and are supported by theoretical performance guarantees.

\section{Acknowledgment}
This work is supported by AFOSR grant FA9550-22-1-0539 with Dr. Erik Blasch as the program director, and Sandra and William R. Wheeler '70 Graduate Fellowship in Aerospace Engineering.
\bibliographystyle{IEEEtran}
\bibliography{root.bib}

\begin{thebibliography}{10}
\providecommand{\url}[1]{#1}
\csname url@samestyle\endcsname
\providecommand{\newblock}{\relax}
\providecommand{\bibinfo}[2]{#2}
\providecommand{\BIBentrySTDinterwordspacing}{\spaceskip=0pt\relax}
\providecommand{\BIBentryALTinterwordstretchfactor}{4}
\providecommand{\BIBentryALTinterwordspacing}{\spaceskip=\fontdimen2\font plus
\BIBentryALTinterwordstretchfactor\fontdimen3\font minus \fontdimen4\font\relax}
\providecommand{\BIBforeignlanguage}[2]{{%
\expandafter\ifx\csname l@#1\endcsname\relax
\typeout{** WARNING: IEEEtran.bst: No hyphenation pattern has been}%
\typeout{** loaded for the language `#1'. Using the pattern for}%
\typeout{** the default language instead.}%
\else
\language=\csname l@#1\endcsname
\fi
#2}}
\providecommand{\BIBdecl}{\relax}
\BIBdecl

\bibitem{frueh2021cislunar}
C.~Frueh, K.~Howell, K.~DeMars, S.~Bhadauria, and M.~Gupta, ``Cislunar space traffic management: Surveillance through earth-moon resonance orbits,'' in \emph{8th European Conference on Space Debris}, vol.~8, 2021.

\bibitem{ning2007autonomous}
X.~Ning and J.~Fang, ``An autonomous celestial navigation method for leo satellite based on unscented kalman filter and information fusion,'' \emph{Aerospace science and Technology}, vol.~11, no. 2-3, pp. 222--228, 2007.

\bibitem{ning2008spacecraft}
------, ``Spacecraft autonomous navigation using unscented particle filter-based celestial/doppler information fusion,'' \emph{Measurement Science and Technology}, vol.~19, no.~9, p. 095203, 2008.

\bibitem{eliot-jgcd}
E.~Nychka and R.~Bhattacharya, ``Cislunar state estimation in the $\mathcal{H}_2/\mathcal{H}_\infty$ framework with linear parameter varying (lpv) models,'' \emph{submitted AIAA Journal of Guidance, Control, and Dynamics}, 2024.

\bibitem{shamma1990analysis}
J.~S. Shamma and M.~Athans, ``Analysis of gain scheduled control for nonlinear plants,'' \emph{IEEE Transactions on Automatic Control}, vol.~35, no.~8, pp. 898--907, 1990.

\bibitem{shamma2012overview}
J.~S. Shamma, ``An overview of lpv systems,'' \emph{Control of linear parameter varying systems with applications}, pp. 3--26, 2012.

\bibitem{el2000advances}
L.~El~Ghaoui and S.-l. Niculescu, \emph{Advances in linear matrix inequality methods in control}.\hskip 1em plus 0.5em minus 0.4em\relax SIAM, 2000.

\bibitem{helton2007linear}
J.~W. Helton and V.~Vinnikov, ``Linear matrix inequality representation of sets,'' \emph{Communications on Pure and Applied Mathematics: A Journal Issued by the Courant Institute of Mathematical Sciences}, vol.~60, no.~5, pp. 654--674, 2007.

\bibitem{tempo2013randomized}
R.~Tempo, G.~Calafiore, F.~Dabbene \emph{et~al.}, \emph{Randomized algorithms for analysis and control of uncertain systems: with applications}.\hskip 1em plus 0.5em minus 0.4em\relax Springer, 2013, vol.~7.

\bibitem{fujisaki2003probabilistic}
Y.~Fujisaki, F.~Dabbene, and R.~Tempo, ``Probabilistic design of lpv control systems,'' \emph{Automatica}, vol.~39, no.~8, pp. 1323--1337, 2003.

\bibitem{ganguli2002reconfigurable}
S.~Ganguli, A.~Marcos, and G.~Balas, ``Reconfigurable lpv control design for boeing 747-100/200 longitudinal axis,'' in \emph{Proceedings of the 2002 American control conference (IEEE cat. no. CH37301)}, vol.~5.\hskip 1em plus 0.5em minus 0.4em\relax IEEE, 2002, pp. 3612--3617.

\bibitem{balas2002linear}
G.~J. Balas, ``Linear, parameter-varying control and its application to aerospace systems,'' in \emph{ICAS congress proceedings}, 2002, pp. 541--1.

\bibitem{gilbert2010polynomial}
W.~Gilbert, D.~Henrion, J.~Bernussou, and D.~Boyer, ``Polynomial lpv synthesis applied to turbofan engines,'' \emph{Control engineering practice}, vol.~18, no.~9, pp. 1077--1083, 2010.

\bibitem{marcos2009lpv}
A.~Marcos and S.~Bennani, ``Lpv modeling, analysis and design in space systems: Rationale, objectives and limitations,'' in \emph{AIAA guidance, navigation, and control conference}, 2009, p. 5633.

\bibitem{9121301}
V.~M. Deshpande and R.~Bhattacharya, ``Sparse sensing and optimal precision: An integrated framework for $\mathcal{H}_2/\mathcal{H}_\infty$ optimal observer design,'' \emph{IEEE Control Systems Letters}, vol.~5, no.~2, pp. 481--486, 2021.

\bibitem{apkarian1995self}
P.~Apkarian, P.~Gahinet, and G.~Becker, ``Self-scheduled $\mathcal{H}_\infty$ control of linear parameter-varying systems: a design example,'' \emph{Automatica}, vol.~31, no.~9, pp. 1251--1261, 1995.

\end{thebibliography}

\end{document}